\documentclass[12pt]{article}

\usepackage{amssymb}%
\usepackage{amsmath}%
\usepackage{amsthm}%
\usepackage{hyperref}%
\usepackage{graphicx}%
\usepackage{xcolor}%
\usepackage{mathrsfs}

\allowdisplaybreaks%

{\theoremstyle{plain}%
 \newtheorem{theorem}{Theorem}
 
 \newtheorem{lemma}{Lemma}% 
}
{\theoremstyle{remark}

}
{\theoremstyle{definition}

}

\DeclareRobustCommand{\stirling}{\genfrac\{\}{0pt}{}}

\begin{document}

\begin{center}
{\large On a problem on a generalization of Euler's totient function}

 \ 
 
{\sc John M. Campbell}

\vspace{0.1in}

{\footnotesize Department of Mathematics and Statistics}

{\footnotesize Dalhousie University}

{\footnotesize Halifax, NS B3H 4R2}

{\footnotesize Canada}

\vspace{0.1in}

{\footnotesize {\tt jh241966@dal.ca}}

\vspace{0.1in}

\end{center}

\begin{abstract}
 B\"uy\"uka\c sik et al.\ [Publ.\ Math.\ Debrecen, 2024] introduced a family of generalizations of Euler's totient function $\varphi(n)$, by 
 setting $\varphi_k(n) = \sum_{a} a^k$ for $a \in [1, n]$ such that $(a, n) = 1$, with $\varphi_0(n) = \varphi(n)$. Letting $\mathcal{D}_{s} = 
 \{ k \geq s : \forall n \geq 1 \ \varphi_s(n) \mid \varphi_k(n) \}$, B\"uy\"uka\c sik et al.\ proved that $\mathcal{D}_{s}$ is finite for each 
 $s \geq 0$, and conjectured that $\mathcal{D}_{1} = \{ 1, 3, 15 \}$ and provided computations to support this conjecture. 
 We succeed in proving this conjecture, using an argument based on our extensive interactions with GPT-5.5 Pro. 
\end{abstract}

\vspace{0.1in}

\noindent {\footnotesize \emph{MSC:} 11A25, 11B68}

\vspace{0.1in}

\noindent {\footnotesize \emph{Keywords:} Euler totient function, Bernoulli number, Bernoulli polynomial, 
 divisor, Faulhaber's formula, von Staudt--Clausen theorem}

\section{Introduction}
 Euler's totient function $\varphi(n)$ giving the number of integers $k \in [1, n]$ such that $k$ and $n$ are coprime is one of the most 
 fundamental arithmetic functions. The importance of the Euler totient function inspires research on generalizations of the form
\begin{equation}\label{generalizephi} 
 \varphi_{k}(n) = \sum_{\substack{ a \in [1, n] \\ (a, n) = 1 }} a^k, 
\end{equation}
 as in the work of B\"uy\"uka\c sik et al.\ \cite{BuyukasikGoralSertbas2024}, who built upon the previous work of Singh \cite{Singh2009} 
 concerning \eqref{generalizephi}. Functions of the form shown in \eqref{generalizephi} are noted in classic texts in both advanced 
 combinatorics \cite[p.\ 199]{Comtet1974} and analytic number theory \cite[p.\ 48]{Apostol1976}. In this latter text, Apostol noted 
 the relations among 
\begin{align}
 \sum_{d \mid n} \frac{ \varphi_k(d) }{d^k} & = \frac{1^k + 2^k + \cdots + n^k}{n^k}, \nonumber \\ 
 \varphi_1(n) & = \frac{1}{2} n \varphi(n) \ \text{for $n > 1$, and} \label{phi1eval} \\
 \varphi_2(n) & = \frac{1}{3} n^2 \varphi(n) + \frac{n}{6} \prod_{p \mid n} (1-p) \ \text{for $n > 1$.} \nonumber 
\end{align}
 In this paper, we prove a conjecture from B\"uy\"uka\c sik et al.\ concerning \eqref{generalizephi}. It appears that this conjecture has 
 remained open, prior to our work. 

 Adopting notation from B\"uy\"uka\c sik et al.\ \cite{BuyukasikGoralSertbas2024}, we write $$ \mathcal{D}_{s} = \{ k \geq s : \forall n 
 \geq 1 \ \varphi_{s}(n) \mid \varphi_k(n) \}, $$ for a fixed and nonnegative integer $s$. B\"uy\"uka\c sik et al.\ proved that 
 $\mathcal{D}_{s}$ is finite for any fixed integer $s \geq 0$, 
 and conjectured that $\mathcal{D}_{1} = \{ 1, 3, 15 \}$
 and provided computations to support this conjecture. We prove this conjecture in the below section, 
 using an argument derived from our many interactions with GPT-5.5 Pro. 

\section{A full solution}
 Since B\"{u}y\"{u}ka\c{s}ik et al.\ \cite[Theorem 2(2)]{BuyukasikGoralSertbas2024} proved that $ \{ 1, 3, 15 \} \subseteq \mathcal{D}_{1}$ 
 (noting that it is immediate that $s \in \mathcal{D}_{s}$), it remains to prove the reverse inclusion. To begin with, we recall the definition 
 of the sequence of Bernoulli numbers, which may be defined via the generating function relation $ \frac{x}{e^{x} - 1} = \sum_{n = 
 0}^{\infty} \frac{B_{n} x^n}{n!}$. We also require the use of the von Staudt--Clausen theorem, giving us that: For every even integer $m 
 \geq 2$, the denominator of $B_{m}$ is $\prod_{(p-1) \mid m} p$. 

 Letting $k > 1$ be an odd integer, and letting $v$ be a positive integer, a key to our construction is given by Bernoulli sums of the form 
\begin{equation}\label{defineCku} 
 C_{k}(v) = \frac{2}{k+1} \sum_{j=0}^{(k-1)/2} \binom{k+1}{2j} B_{2j} (1 - 2 j)^{v}. 
\end{equation}
 Similarly, \emph{Bernoulli polynomials} may be defined so that 
\begin{equation}\label{defineBNx} 
 B_{N}(x) = \sum_{r=0}^{N} \binom{N}{r} B_{r} x^{N-r}
\end{equation}
 and are required for the purposes of our proof of the following key lemma. Our construction also requires the use of \emph{Stirling 
 numbers of the second kind}, which may be defined so that $\stirling{n}{k} = \frac{1}{k!} \sum_{j=0}^{k} (-1)^{k-j} \binom{k}{j} j^n$. 
 By writing $D$ in place of the differential operator $x \frac{d}{dx}$, the relation 
\begin{equation}\label{DStirling} 
 D^{m} = \sum_{\ell = 1}^{m} \stirling{m}{\ell} x^{\ell} \frac{d^{\ell}}{dx^{\ell}} 
\end{equation}
 is required for our purposes and can be found  (in an equivalent form) 
  in Comtet's text \cite[p.\ 220]{Comtet1974}. 

\begin{lemma}\label{triangularlemma}
 For $k > 1$ odd, suppose that $C_k(v)$ is an integer for every $v \geq 1$. Then, for every even integer $h \in [2, k-1]$, 
 we have that 
 $ 2 \frac{k!}{h!} B_{h} \in \mathbb{Z}$. 
\end{lemma}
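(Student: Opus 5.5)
The plan is to turn the hypothesis into integrality statements about the individual coefficients
$$a_j=\frac{2}{k+1}\binom{k+1}{2j}B_{2j}=\frac{2\,k!}{(2j)!\,(k+1-2j)!}\,B_{2j},\qquad 0\le j\le m:=\tfrac{k-1}{2},$$
so that $C_k(v)=\sum_{j=0}^{m}a_j x_j^{v}$ with $x_j=1-2j$. With $h=2j$ the target quantity is $2\frac{k!}{h!}B_h=(k+1-h)!\,a_j$. We must therefore show that the factor $(k+1-h)!$ clears every denominator of $a_j$.

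\emph{Step 1 (linear-recurrence trick).} For any $Q(t)=\sum_{v\ge1}c_vt^v\in t\mathbb{Z}[t]$, the sum $\sum_v c_vC_k(v)=\sum_j a_jQ(x_j)$ is an integer. Taking $Q(t)=t\prod_{i\ne j}(t-x_i)$ gives
$$a_j\,(1-2j)\,2^{m}\,j!\,(m-j)!\in\mathbb{Z}.$$
So $a_j$ has only controlled denominators.

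\emph{Step 2 (Bernoulli-polynomial/Stirling refinement).} Step 1 is too lossy, so I would replace monomials by the operator $D=x\frac{d}{dx}$. Note that $x^{-k}\sum_j\binom{k+1}{2j}B_{2j}x^{k+1-2j}$ is essentially $x^{-k}B_{k+1}(x)$ by \eqref{defineBNx}, since the odd Bernoulli numbers vanish apart from $B_1$, whose term is handled separately. Applying $D^v$ and evaluating at $x=1$ produces $(1-2j)^v$ weights. By \eqref{DStirling}, and by inverting with Stirling numbers of the first kind (integers), integer combinations of the $C_k(v)$ equal evaluations of $x^\ell\frac{d^\ell}{dx^\ell}$ applied to $x^{-k}B_{k+1}(x)$. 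The derivatives $\frac{d^\ell}{dx^\ell}B_{k+1}(x)=\frac{(k+1)!}{(k+1-\ell)!}B_{k+1-\ell}(x)$ bring out exactly the falling factorials that match $\frac{k!}{h!}$. This yields a triangular system: for each $\ell$, an integer linear combination of the quantities $\frac{2k!}{h!}B_h$ with integer coefficients, whose leading term has coefficient $\pm1$. Descending induction on $h$ then gives each $2\frac{k!}{h!}B_h\in\mathbb{Z}$, which is the form in which the lemma is stated.

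\emph{Step 3 (prime-by-prime check, as a fallback).} By von Staudt--Clausen, the denominator of $B_h$ is squarefree, with primes $p$ such that $(p-1)\mid h$. The factor $2$ handles $p=2$. For odd $p$ with $p\mid(h+1)\cdots k$ there is nothing to prove. The bad case is $\lfloor h/p\rfloor=\lfloor k/p\rfloor$. There I would choose a $p$-adically sharper $Q$ that omits $t$ and all factors $(t-x_i)$ with $x_i\not\equiv x_j\pmod p$ from the valuation count, and argue that $v_p(Q(x_j))=0$.

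The main obstacle is Step 2 (equivalently, Step 3): showing the triangular matrix really is unitriangular over $\mathbb{Z}$, and not merely triangular with denominators like $2^m m!$, which is what the naive Step 1 gives. I would first work out the cases $k=5,7$ by hand to pin down the exact normalization of $\ell$ against $h$.
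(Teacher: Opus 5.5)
Your Step 2 is essentially the paper's proof, and it does go through. What is missing is only the check you flag as ``the main obstacle,'' and that is a one-line product-rule computation rather than a real difficulty. In the language of your Step 1, Step 2 is the choice $Q(t)=t(t-1)\cdots(t-\ell+1)$.

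Put $F(x)=x^{-k}\bigl(B_{k+1}(x)-B_{k+1}\bigr)$. The constant $B_{k+1}$ has to be removed, not only the $B_1$ term, because $C_k$ stops at $j=(k-1)/2$; your phrase ``applied to $x^{-k}B_{k+1}(x)$'' should be read this way. With this $F$ one has $C_k(v)=\frac{2}{k+1}(D^vF)(1)$ for all $v\ge 1$. Since $x^{\ell}\frac{d^{\ell}}{dx^{\ell}}=D(D-1)\cdots(D-\ell+1)=\sum_{v=1}^{\ell}s(\ell,v)D^{v}$, where the $s(\ell,v)$ are the integer first-kind Stirling numbers, you get $\frac{2}{k+1}F^{(\ell)}(1)=\sum_{v=1}^{\ell}s(\ell,v)C_k(v)\in\mathbb{Z}$.

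Now apply the product rule with $g(x)=x^{-k}$ and $H(x)=B_{k+1}(x)-B_{k+1}$. Since $H(1)=0$, the $r=0$ term drops, and for $1\le \ell\le k-1$
\[
\frac{2}{k+1}F^{(\ell)}(1)=\sum_{r=1}^{\ell}\binom{\ell}{r}\,g^{(\ell-r)}(1)\,\frac{2\,k!}{(k+1-r)!}\,B_{k+1-r}.
\]
The following facts make this system unitriangular over $\mathbb{Z}$:
\begin{itemize}
\item $B_{k+1-r}(1)=B_{k+1-r}$, because $k+1-r\ge 2$;
\item the terms with $r$ odd vanish, because then $k+1-r\ge 3$ is odd;
\item $g^{(i)}(1)=(-k)(-k-1)\cdots(-k-i+1)\in\mathbb{Z}$;
\item the $r=\ell$ term has coefficient exactly $1$.
\end{itemize}
For even $\ell=k+1-h$ the top term is $2\frac{k!}{h!}B_h$. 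So no denominators such as $2^m m!$ can occur, and induction on even $\ell$ finishes the proof. Steps 1 and 3 are unnecessary.

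The paper runs the same triangular system in the other direction. It expands $D^{2s}$ with Stirling numbers of the second kind and uses only the even values $C_k(2s)$, obtaining $C_k(2s)=T_s+\sum_{r<s}a_{s,r}T_r$ with $T_s=2\frac{k!}{(k+1-2s)!}B_{k+1-2s}$. Your first-kind inversion isolates $F^{(\ell)}(1)$ directly and is slightly cleaner. Its cost is that it also uses the odd values $C_k(v)$, which the hypothesis permits.
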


\begin{proof}
 Write $N = k + 1$, with $N \geq 4$ being even. For an integer 
\begin{equation}\label{srange} 
 s \in \left[ 1, \frac{N-2}{2} \right], 
\end{equation}
 we define
\begin{equation}\label{defineTs}
 T_{N, s} = T_{s} = 2 \frac{(N-1)!}{(N-2s)!} B_{N-2s}. 
\end{equation}
 Since every even integer $h \in [2, k-1]$ can be written uniquely in the form
 $h = N - 2s$ for $N$ and $s$ as specified, 
 it remains to prove that $T_{s} \in \mathbb{Z}$. 

 Now, define 
\begin{equation}\label{defineFN}
 F_{N}(x) = x^{1-N} \big( B_{N}(x) - B_{N} \big), 
\end{equation}
 recalling the definition of Bernoulli polynomials in \eqref{defineBNx}. We claim that 
\begin{equation}\label{nestedD}
 C_{N - 1}(2s) = \frac{2}{N} \big( D^{2s} F_{N} \big)(1). 
\end{equation}
 To prove the relation in \eqref{nestedD}, we begin by manipulating the formula in 
 \eqref{defineBNx} to obtain that 
\begin{equation}\label{manipulatedBNx} 
 x^{1-N} B_{N}(x) = \sum_{r=0}^{N} \binom{N}{r} B_{r} x^{1-r},
\end{equation}
 so that \eqref{manipulatedBNx} gives us that 
\begin{equation}\label{sumtoreduce} 
 D^{2s}\big( x^{1-N} B_{N}(x) \big) \Big|_{x=1}
 = \sum_{r=0}^{N} \binom{N}{r} B_{r}(1-r)^{2s}. 
\end{equation}
 From the vanishing of the $r = 1$ case of the summand on the right of \eqref{sumtoreduce} together with the vanishing of Bernoulli 
 numbers with odd indices greater than $1$, we find that 
\begin{equation}\label{combinewithFN} 
 D^{2s}\big( x^{1-N} B_{N}(x) \big) \Big|_{x=1}
 = \frac{N}{2} C_{N-1}(2s) + B_{N}(1-N)^{2s}, 
\end{equation}
 so that an application of the definition of $F_{N}(x)$ in \eqref{defineFN}
 via \eqref{combinewithFN} gives us an equivalent version of \eqref{nestedD}. 

 Now, we rewrite $F_{N}(x)$, as defined in \eqref{defineFN}, by setting $g(x) = x^{1-N}$ and $H(x) = B_{N}(x) - B_{N}$, with $F_{N}(x) 
 = g(x) H(x)$. Observe that $B_{N}(1) = B_{N}$ and $H(1) = 0$.
 The product rule for derivatives then gives us that 
\begin{equation}\label{diffprod} 
 F_{N}^{(\ell)}(1) = \sum_{r=1}^{\ell} \binom{\ell}{r} g^{(\ell - r)}(1) B_{N}^{(r)}(1),
\end{equation}
 noting that we obtain the vanishing of $\binom{\ell}{r} g^{(\ell - r)}(1) H^{(r)}(1) $
 for the $r = 0$ case. 
 Observe that $g^{(\ell - r)}(1) \in \mathbb{Z}$, recalling the definition of $g(x)$ as an integer power of $x$. 
 From the definition of Bernoulli polynomials in \eqref{defineBNx}, one may verify that 
\begin{equation}\label{setx1} 
 B_{N}^{(r)}(x) = \frac{N!}{(N-r)!} B_{N - r}(x) 
\end{equation}
 for positive integers $r \leq N$, and we specialize \eqref{setx1} to 
\begin{equation}\label{specializeto1}
 B_{N}^{(r)}(1) = \frac{N!}{(N-r)!} B_{N - r}(1). 
\end{equation}
 According to the product rule expansion in \eqref{diffprod}, we have that sum in this product rule is restricted to indices $r \in [1, 
 \ell]$. In view of the identity in \eqref{nestedD}, which involves the application of $D^{2s}$ to $F_{N}(1)$, it is implicit that $\ell \leq 2 
 s$, and we recall the bounds on $s$ in \eqref{srange}, giving us that $2 \leq N - r$. Now, if $r$ is odd, then $N - r \geq 2$ is odd. 
 So, if $r$ is odd, then $B_{N-r}(1) = B_{N-r} = 0$. So, from \eqref{specializeto1}, we have that if $r$ is odd, 
 then $B_{N}^{(r)}(1)$ vanishes. 
 So, for an even value $r = 2i$, 
 we find that 
\begin{equation}\label{BntoT} 
 \frac{2}{N} B_{N}^{(2i)}(1) = T_{i}, 
\end{equation}
 recalling the definition in \eqref{defineTs}. An application of 
 the expression in \eqref{nestedD} of $C_{N-1}(2s)$ in terms of $\big( D^{2s}F_{N} \big)(1)$ 
 together with 
 the classical Stirling number identity in \eqref{DStirling}
 give  us that 
\begin{equation}\label{separateStirling} 
 \big( D^{2s} F_{N} \big)(1) = F_{N}^{(2s)}(1)
 + \sum_{\ell = 1}^{2s-1} \stirling{2s}{\ell} F_{N}^{(\ell)}(1). 
\end{equation}
 Through an application of \eqref{diffprod} to \eqref{separateStirling}, we obtain 
\begin{multline*}
 \frac{2}{N} \big( D^{2s} F_{N} \big)(1) = \sum_{r=1}^{2s} \binom{2s}{r} g^{(2s - r)}(1) \frac{2}{N} B_{N}^{(r)}(1) + \\ 
 \sum_{\ell = 1}^{2s-1} \stirling{2s}{\ell} \sum_{r=1}^{\ell} \binom{\ell}{r} g^{(\ell - r)}(1) \frac{2}{N} B_{N}^{(r)}(1), 
\end{multline*}
 and we rewrite this as 
\begin{multline*}
 \frac{2}{N} \big( D^{2s} F_{N} \big)(1) = \frac{2}{N} B_{N}^{(2s)}(1) + \sum_{r=1}^{2s-1} \binom{2s}{r} g^{(2s - r)}(1) \frac{2}{N} B_{N}^{(r)}(1) 
 + \\ 
 \sum_{\ell = 1}^{2s-1} \stirling{2s}{\ell} \sum_{r=1}^{\ell} \binom{\ell}{r} g^{(\ell - r)}(1) \frac{2}{N} B_{N}^{(r)}(1). 
\end{multline*}
 For indices $r$ involved in the above sums, we have that $1 \leq r \leq 2s \leq N-2$, 
 with $N - r \geq 2$. So, if $r$ is odd, then $N - r$ is odd and $\geq 3$, 
 with $B_{N-r}(1) = B_{N - r} = 0$, and hence
\begin{multline*}
 \frac{2}{N} \big( D^{2s} F_{N} \big)(1) = \frac{2}{N} B_{N}^{(2s)}(1) + \sum_{r=1}^{s-1} \binom{2s}{2r} g^{(2s - 2r)}(1) \frac{2}{N} B_{N}^{(2r)}(1) 
 + \\ 
 \sum_{\ell = 1}^{2s-1} \stirling{2s}{\ell} \sum_{ r = 1 }^{ \lfloor {\ell}/{2} \rfloor} \binom{\ell}{2r} g^{(\ell - 2r)}(1) \frac{2}{N} B_{N}^{(2r)}(1). 
\end{multline*}
 From the formula for $C_{N - 1}(2s)$ in \eqref{nestedD} together with the definition in \eqref{BntoT}, we find that there exist integers 
 $a_{s, r}$ such that 
\begin{equation}\label{inductionT} 
 T_{s} = C_{N-1}(2s) - \sum_{r=1}^{s-1} a_{s, r} T_{r}. 
\end{equation}
 For the $s = 1$ case of \eqref{inductionT}, we find that $C_{k}(2) = T_{1}$, and $C_{k}(2) $ is an integer by assumption. So, from the 
 assumption that $C_{k}(v)$ is an integer for every $v \geq 1$ and for $k > 1$ odd, and since $N \geq 4$ is even,  an inductive argument 
 gives us, from  \eqref{inductionT}, that $T_{s}$ is an integer for 
 $s$ in the interval indicated in \eqref{srange}, 
 as desired. 
\end{proof}

\begin{theorem}\label{maintheorem}
 The containment $\mathcal{D}_{1} \subseteq \{ 1, 3, 15 \}$ holds. 
\end{theorem}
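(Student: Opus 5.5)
The plan is to reduce membership $k \in \mathcal{D}_1$ to the hypothesis of Lemma~\ref{triangularlemma}, and then use the von Staudt--Clausen theorem to show that the conclusion of the lemma fails unless $k \in \{1,3,15\}$.

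\emph{Step 1 (reduction to odd $k$).} Fix $k \in \mathcal{D}_1$ with $k>1$. Taking $n=2$ gives $\varphi_1(2)=1$, which carries no information. I will instead use small primes $n=p$ together with \eqref{phi1eval}, and show that even $k$ is impossible. For even $k$, the congruence of $\varphi_k(p)=\sum_{a<p}a^k$ modulo $p$ is governed by whether $(p-1)\mid k$, and this is incompatible with $\tfrac{p(p-1)}{2}\mid\varphi_k(p)$ for a suitable small $p$ such as $p=3$ or $p=5$. So I assume $k$ is odd.

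\emph{Step 2 (producing $C_k(v)$).} For a prime $p$, Faulhaber's formula gives $\varphi_k(p)=S_k(p-1)=\frac{1}{k+1}\big(B_{k+1}(p)-B_{k+1}\big)$. By \eqref{phi1eval}, the divisibility $\varphi_1(p)\mid\varphi_k(p)$ says exactly that
\[
 Q(p)=\frac{2}{k+1}\cdot\frac{B_{k+1}(p)-B_{k+1}}{p(p-1)}
\]
is an integer for every prime $p$. Up to the factor $x^{k-1}$ and the division by $x-1$, this is essentially $\frac{2}{k+1}F_{k+1}(x)$ from \eqref{defineFN}, evaluated at $x=p$.

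\begin{itemize}
 \item I will first clear the factor $p-1$ carefully. Since $F_{N}(1)=0$, I will work with an auxiliary rational function $G(x)$ satisfying $Q(p)=p^{k-1}G(p)$, with $G$ a combination of the monomials $x^{1-2j}$ with coefficients $\frac{2}{k+1}\binom{k+1}{2j}B_{2j}$, modulo the harmless terms coming from $j=0$ and $j=(k+1)/2$.
 \item Fix a prime $\ell$ and a large exponent $m$. By Dirichlet's theorem, pick primes $p\equiv 1 \pmod{\ell^m}$, with $p=1+\ell^m t$ for many different $t$.
 \item Expand $p^{1-2j}=\sum_{v}\binom{1-2j}{v}(\ell^m t)^v$ $\ell$-adically.
 \item The identity $\binom{1-2j}{v}v!=\sum_{u}s(v,u)(1-2j)^u$, with Stirling numbers of the first kind, inverts \eqref{DStirling}. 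Combined with finite differences in $t$, it should extract from the $\ell$-integrality of $Q(p)$ the $\ell$-integrality of every $C_k(v)$ for $v\ge1$.
 \item Doing this for each prime $\ell$ yields $C_k(v)\in\mathbb{Z}$ for all $v \geq 1$, so Lemma~\ref{triangularlemma} applies.
\end{itemize}

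This step is the main obstacle. The delicate point is controlling the $\ell$-adic valuations of $\ell^{mv}/v!$ so that the finite differences isolate each $C_k(v)$ individually. My first attempt would be to take $m$ large relative to $k$ and to induct on $v$.

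\emph{Step 3 (von Staudt--Clausen).} Lemma~\ref{triangularlemma} gives $2\frac{k!}{h!}B_h\in\mathbb{Z}$ for every even $h\in[2,k-1]$.

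\begin{itemize}
 \item For $h=k-1$, every prime $p$ with $(p-1)\mid(k-1)$ must divide $2k$. Taking $p=3$ gives $3\mid k$.
 \item More generally, for each even $h$, every prime $p$ with $(p-1)\mid h$ and $p>k$ is impossible, because such a $p$ does not divide $k!/h!$. So no prime $p\in(k,\,k+\,\cdots]$ can have $p-1$ equal to an even number $h\le k-1$. That is automatic, so the real content lies in primes $p\le k$ with $p\nmid 2\,k(k-1)\cdots(h+1)$.
 \item I will choose $h$ just below $k$, say $h=k-1,k-3,k-5$. For each such $h$, a prime $p$ with $(p-1)\mid h$ must divide one of the few numbers $k,k-1,\dots,h+1$.
 \item The primes $p=3$ and $p=5$ (when $4\mid h$) and $p=7$ (when $6\mid h$) then force strong congruence conditions on $k$ modulo $3$, $5$, $7$ and $8$.
 \item I expect these conditions to leave only finitely many residue classes of $k$. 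The remaining large $k$ are then ruled out by taking $h=2p_0-2$ for a prime $p_0$ in $(\tfrac{k}{4},\tfrac{k}{2})$, which exists by Bertrand's postulate. Such a $p_0$ divides the denominator of $B_h$ but, for $k$ large, divides none of $k,\dots,h+1$.
 \item The small cases of $k$ left over are checked directly, leaving $k\in\{3,15\}$.
\end{itemize}
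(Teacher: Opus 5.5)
Both your Step 2 and your Step 3 have genuine gaps.

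\textbf{Step 2.} This step cannot be completed. Integrality of $\varphi_k(p)/\varphi_1(p)$ at primes $p$ does not imply $C_k(v)\in\mathbb{Z}$ for $v\ge 2$. For $k=5$ one has $\varphi_5(p)/\varphi_1(p)=p(p-1)(2p^2-2p-1)/6$, which is an integer for every integer $p$. Yet $C_5(2)=\frac13\bigl(1+\frac52-\frac92\bigr)=-\frac13$. So $5\notin\mathcal{D}_1$ is witnessed only at composite $n$, for instance $n=91=7\cdot 13$.

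Your own $\ell$-adic set-up shows why. Put $\Phi(x)=\sum_j\gamma_j x^{1-2j}$ with $\gamma_j=\frac{2}{k+1}\binom{k+1}{2j}B_{2j}$. Then $Q(x)=x^{k-1}\frac{\Phi(x)-1}{x-1}$, and for $p=1+y$,
\[
\frac{\Phi(1+y)-1}{y}=\sum_{v\ge1}\Bigl(\sum_j\gamma_j\binom{1-2j}{v}\Bigr)y^{v-1}.
\]
The inner coefficients have denominators dividing the fixed integer $L$. Once $y=\ell^m t$ with $m\ge\nu_\ell(L)$, every term with $v\ge2$ is automatically $\ell$-integral. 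Finite differences in $t$ therefore only ever recover $\Phi'(1)=C_k(1)$.

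The missing idea is that the exponent $v$ in $C_k(v)$ is $\omega(n)$, not a Taylor order. Take squarefree $n=p_1\cdots p_v$ with every $p_i\equiv 1\pmod L$, which Dirichlet's theorem supplies. Then multiplicativity gives $(-1)^v n^{k-2j}\prod_{p\mid n}(1+p+\cdots+p^{2j-2})\equiv(1-2j)^v\pmod L$. Hence $\varphi_k(n)/\varphi_1(n)\equiv C_k(v)$ modulo $\mathbb{Z}$, and this is how the paper gets $C_k(v)\in\mathbb{Z}$ for all $v$.

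\textbf{Step 3.} This step also fails as written. With $h=2p_0-2$ and $p_0<k/2$, the number $h+2=2p_0$ lies in $\{h+1,\dots,k\}$. So $p_0\mid k!/h!$ and there is no contradiction. More generally, for $h=m(p-1)$ with $m\le p$, the first multiple of $p$ above $h$ is $mp$. You would therefore need $m(p-1)<k<mp$, which a Bertrand prime in $(k/4,k/2)$ with $m=2$ never gives. Congruence conditions modulo $3,5,7,8$ also do not bound $k$.

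What works is the following. For each odd prime $p\le k$, take $h=k-\rho_p$, the largest multiple of $p-1$ below $k$, where $\rho_p=k\bmod(p-1)\in[1,p-2]$. This yields $k\bmod p<k\bmod(p-1)$ for every odd prime $p\le k$. Applied to an odd prime factor of $k+1$, it forces $k+1=2^a$. If $a=2^t u$ with $u>1$ odd, then $2^{2^t}+1$ divides $k+2$, and applying the criterion to a prime factor of $2^{2^t}+1$ forces $a=2^\lambda$. Finally $p=7,11,29$ rule out $\lambda\ge 3$.

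Your Step 1 is fine; $n=3$ alone suffices.
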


\begin{proof}
 Let $k \in \mathcal{D}_{1}$. 
 To evaluate
 $$ \mathcal{D}_{1} = \{ k \geq 1 : \forall n \geq 1 \ \varphi_{1}(n) \mid \varphi_k(n) \}, $$ 
 we begin by noting that $\varphi_1(3) = 3$ and that $\varphi_k(3) = 1 + 2^k$. 
 So, in order for $\varphi_1(3) \mid \varphi_k(3)$ to hold, the integer $k$ is required to be odd. 
 So, the integer $k$ is henceforth assumed to be odd. 

 Now, if $k = 1$, then $k$ is in the desired containing set. So, we henceforth let $k > 1$, 
 (again while working under the assumption that $k \in \mathcal{D}_{1}$). 

 We proceed to make use of a standard formulation of the Faulhaber formula whereby 
 $$ \sum_{a=1}^{N} a^{k}
 = \frac{1}{k+1} \sum_{r=0}^{k} (-1)^{r} \binom{k+1}{r} B_{r} N^{k+1-r}. $$
 It was proved by Singh \cite[Theorem 3]{Singh2009} that 
\begin{equation}\label{inclusionex} 
 \varphi_{k}(n) = \sum_{d \mid n} \mu(d) d^{k} \sum_{b=1}^{n/d} b^k, 
\end{equation}
 and the relation in \eqref{inclusionex} can be obtained via the M\"{o}bius inversion formula. 
 Write $$ \alpha_j(n) = \sum_{d \mid n} \mu(d) d^{j-1} = \prod_{p \mid n} (1 - p^{j - 1}) $$
 for $n > 1$. 
 Singh also obtained that 
\begin{equation}\label{Singhalpha} 
 \varphi_{k}(n) = \frac{1}{k+1} \sum_{j = 0}^{k} (-1)^{j} \binom{k+1}{j} B_{j} \alpha_{j}(n) n^{k+1-j} 
\end{equation}
 as a consequence of \eqref{inclusionex}. 

 Now, let $n > 1$. Since $B_{j}$ vanishes for odd $j > 1$, and since $\alpha_1(n)$ also vanishes, 
 we obtain from \eqref{Singhalpha} that 
\begin{equation}\label{bisection}
 \varphi_{k}(n) = \frac{n^k \varphi(n)}{k+1} 
 + \frac{1}{k+1} \sum_{j=1}^{(k-1)/2} \binom{k+1}{2j} B_{2j} n^{k+1-2j} \prod_{p \mid n} (1-p^{2j-1}), 
\end{equation}
 recalling the assumption that $k > 1$ is odd. 

 Now, suppose that $n$ is squarefree and that $m$ is an odd and positive integer. An application of Euler's product formula for $\varphi$ 
 then gives us that 
\begin{equation}\label{prodgeom} 
 \prod_{p \mid n} (1 - p^m) = (-1)^{\omega(n)} \varphi(n) \prod_{p \mid n} (1 + p + \cdots + p^{m-1}). 
\end{equation}
 Dividing both sides of \eqref{bisection} by 
 $\varphi_{1}(n)$, the relations in 
 \eqref{phi1eval} and \eqref{prodgeom} then give us that 
\begin{multline*}
 \frac{\varphi_k(n)}{\varphi_1(n)} 
 = \\ \frac{2n^{k-1}}{k+1} + \frac{2 (-1)^{\omega(n)}}{k+1}
 \sum_{j=1}^{(k-1)/2} \binom{k+1}{2j} B_{2j} n^{k-2j} 
 \prod_{p \mid n} (1 + p + \cdots + p^{2j-2}), 
\end{multline*}
 again with the assumptions that $n$ is squarefree. 
 Since we are working under the assumption that $k \in \mathcal{D}_{1}$, 
 we have that the quotient $ \frac{\varphi_k(n)}{\varphi_1(n)} $ is an integer. 

 Again with the ongoing assumption that $k \in \mathcal{D}_{1}$, we claim that $C_{k}(u) \in \mathbb{Z}$ for each $u \geq 1$. In this 
 direction, we let $L$ denote the least common multiple of the denominators, in lowest terms, among all expressions of the form 
 $\frac{2}{k+1} \binom{k+1}{2j} B_{2j}$ for $0 \leq j \leq \frac{k-1}{2}$, noting that the $j=0$ case reduces to $\frac{2}{k+1}$. Dirichlet's 
 theorem on primes in arithmetic progressions then gives us that 
 there are infinitely many primes congruent to $1$ modulo $L$. 
 For fixed $u \geq 1$, choose $u$ distinct primes $p_1$, $p_2$, $\ldots$, $p_u$
 that are all congruent to $1$ modulo $L$, 
 and set $n = p_1 p_2 \cdots p_u$. 
 Then $n$ is squarefree, and $u = \omega(n)$, and 
\begin{equation}\label{n1modL}
 n \equiv 1 \bmod L. 
\end{equation}
 Moreover, the expansion of the product 
 $\prod_{p \mid n} (1 + p + \cdots + p^{m-1}) $ reveals that 
\begin{equation}\label{mtoomega}
 \prod_{p \mid n} (1 + p + \cdots + p^{m-1}) \equiv m^{\omega(n)} \pmod L. 
\end{equation}
 From \eqref{n1modL} and \eqref{mtoomega} together, we find that 
\begin{equation}\label{summandmod}
 (-1)^{\omega(n)} n^{k-2j} \prod_{p \mid n} (1 + p + \cdots + p^{2j-2}) \equiv (1-2j)^{\omega(n)} \pmod L, 
\end{equation} 
 from the construction of $L$, letting $ j \in \big[1, \frac{k-1}{2} \big]$. Now, we proceed to rewrite $ \frac{ \varphi_k(n) }{ \varphi_1(n) } 
 - C_k(\omega(n))$ as 
\begin{multline*}
 \frac{2n^{k-1}}{k+1} + \frac{2 }{k+1} \sum_{j=1}^{(k-1)/2} \binom{k+1}{2j} B_{2j} (-1)^{\omega(n)} n^{k-2j} 
 \prod_{p \mid n} (1 + p + \cdots + p^{2j-2}) - \\ 
 \frac{2}{k+1} \sum_{j=0}^{(k-1)/2} \binom{k+1}{2j} B_{2j} (1 - 2 j)^{\omega(n)}, 
\end{multline*}
 and this may, in turn, and via \eqref{summandmod}, be rewritten as 
\begin{multline*}
 \frac{2n^{k-1}}{k+1} + \frac{2 }{k+1}
 \sum_{j=1}^{(k-1)/2} \binom{k+1}{2j} B_{2j} (z_j L + (1- 2 j)^{\omega(n)}) - \\ 
 \frac{2}{k+1} \sum_{j=0}^{(k-1)/2} \binom{k+1}{2j} B_{2j} (1 - 2 j)^{\omega(n)}, 
\end{multline*}
 for some integers $z_j$ for $j \in \big\{ 1, 2, \ldots, \frac{k-1}{2} \big\}$, so that the above combination of Bernoulli sums reduces to 
\begin{equation}\label{intBernoulli} 
 \frac{2 }{k+1} (n^{k-1}-1) + 
 \sum_{j=1}^{(k-1)/2} \left( \frac{2 }{k+1} \binom{k+1}{2j} B_{2j} L \right) \, z_j. 
\end{equation}
 From our construction, we have that $\operatorname{den}\big( \frac{2}{k+1} \big)$ divides $L$. Recalling \eqref{n1modL}, we find that 
 $n^{k-1}-1$ is a multiple of $L$, so that $\frac{2}{k+1} (n^{k-1}-1)$ is an integer. In a similar spirit, our construction of $L$ is such that 
 each expression of the form $ \frac{2 }{k+1} \binom{k+1}{2j} B_{2j} L$ for $j \in \big\{ 1, 2, \ldots, \frac{k-1}{2} \big\}$
 is an integer. So, since \eqref{intBernoulli} is an integer, 
 we have that $ \frac{ \varphi_k(n) }{ \varphi_1(n) } - C_k(\omega(n))$ is an integer, 
 and, in turn, since $ \frac{ \varphi_k(n) }{ \varphi_1(n) } $ is an integer, 
 we can conclude that $C_{k}(\omega(n))$ is an integer. 

    Observe that  $C_k(u)$ is defined, as in \eqref{defineCku}, independently of the value $n$ given above. 
 Moreover, for an arbitrary integer $v \geq 1$, we can construct an admissible squarefree integer $n$ (in the above manner)
 such that $\omega(n) = v$. So, since $C_{k}(\omega(n)) \in \mathbb{Z}$, we can conclude that 
 $C_{k}(v) \in \mathbb{Z}$ for an arbitrary integer $v \geq 1$. 
 
 So, we have shown that if $k \in \mathcal{D}_{1}$ and $k > 1$, then $k$ is odd and $C_{k}(v) \in \mathbb{Z}$ for all $v \geq 1$. By Lemma 
 \ref{triangularlemma}, we thus have that: For each even integer $h \in [2, k-1]$, 
 we have that 
\begin{equation}\label{2fracBinZ} 
 2 \frac{k!}{h!} B_{h} \in \mathbb{Z}. 
\end{equation}
 
 As above, let $h$ be an even integer such that $h \in [2, k-1]$. 
 Now, let $p$ be an odd prime such that $(p-1) \mid h$. 
 By the von Staudt--Clausen theorem, this same prime $p$
 is a factor in the denominator of $B_{h}$. Moreover, since $p \neq 2$, 
 the factor $2$ in 
 $2 \frac{k!}{h!} B_{h} $ does not cancel with the factor $p$ in the denominator of $B_{h}$. 
 From \eqref{2fracBinZ}, we can conclude that 
 $p$ divides $\frac{k!}{h!} = (h+1)(h+2) \cdots k$. 
 So, we have proved the following property: 
 For an even integer $h \in [2, k-1]$, and for an odd prime $p$
 such that $(p-1) \mid h$, there exists a multiple of $p$ in 
\begin{equation}\label{displayinterval}
 \{ h+1, h+2, \ldots, k \}. 
\end{equation}
 
 Now, let $p \leq k$ denote an odd prime. We write $\rho_p$ in place of the least nonnegative residue of $k$ modulo $p-1$. Since $k$ is 
 odd and $p-1$ is even, 
 it is not the case that $k \equiv 0 \bmod {(p-1)}$, and hence the bounds
\begin{equation}\label{usewithq} 
 1 \leq \rho_p \leq p - 2. 
\end{equation}
 Set $h = k - \rho_p$. Since $k$ is odd and since $\rho_p$ is the residue of an odd number modulo an even number, 
 we see that $h$ is even.
 Moreover, since $\rho_p \equiv k \bmod {(p-1)}$, 
 we find that $(p-1) \mid h$. By assumption
 that $p \leq k$, we see that 
 $h = k - \rho_p \geq k - (p-2) \geq 2$. 
 So, our constructed value $h$ 
 satisfies the above conditions concerning the interval on display in \eqref{displayinterval}, i.e., 
 so that there is at least one value among
\begin{equation}\label{wobrackets}
 k - \rho_p + 1, k - \rho_p+2, \ldots, k 
\end{equation}
 that is a multiple of $p$. 

 Now, let $r_p$ denote the least nonnegative residue of $k$ modulo $p$ (noting the contrast to the definition of $\rho_p$). The greatest 
 multiple of $p$ not exceeding $k$ is $k - r_p$. So, since 
 there is a multiple of $p$ among the 
 values in \eqref{wobrackets}, 
 we can conclude that there exists an index 
\begin{equation}\label{iotain} 
 \iota \in 
 \{ 0, 1, \ldots, \rho_p - 1 \} 
\end{equation} 
 such that 
\begin{equation}\label{kminusrp}
 k - r_{p} = k - \iota. 
\end{equation}
 So, from \eqref{iotain} and \eqref{kminusrp} together, we find that 
 $ r_p < \rho_p$, i.e., so that: For each odd prime $p \leq k$, the relation 
\begin{equation}\label{comparepmod}
 k \pmod {p} < k \pmod {(p-1)} 
\end{equation}
 holds. 

 By way of contradiction, suppose that there exists an odd prime $q$ dividing $k + 1$. Since $k+1$ is even, we see that $q \leq k$. Since 
 $k \equiv -1 \bmod q$, we find that the least nonnegative 
 residue of $k$ modulo $q$ is $q-1$. 
 Being consistent with notation given above, 
 we have that $\rho_q$ 
 is the least nonnegative residue of $k$ modulo $q - 1$, 
 and, from \eqref{usewithq}, we have that 
 $\rho_{q} \leq q-2$. 
 The relation in \eqref{comparepmod} would then give us that 
 $q - 1 < q - 2$, 
 providing a desired contradiction. 

 So, since $k + 1$ does not have any odd prime divisor, we 
 write $k + 1 = 2^{a}$ for some integer $a$, noting that $a \geq 2$ since $k$ is odd and since $ k > 1$. 

 Recall that \eqref{comparepmod} holds for an odd prime $p \leq k$. 
 Since $k > 1$ is odd, we are permitted to let $p = 3$. 
 So, from \eqref{comparepmod} (recalling that $k$ is odd), 
 since $k \bmod {3} < k \bmod {2}$, 
 we can conclude that $3 \mid k$.
 So, since
 $2^{a} \equiv 1 \bmod 3$, we can, in turn, conclude that 
 $a$ is even. 

 By way of contradiction, suppose that $a = 2^{t} u$ for an odd integer $u > 1$ and for $t \geq 1$. 
 We proceed to make use of the property of cyclotomic polynomials
 such that $X^{v} + 1 = (X+1)(X^{v - 1} - X^{v - 2} + X^{v-3} - \cdots - X + 1)$ for an odd, positive integer $v$. 
 This factorization gives us that 
\begin{equation}\label{fromcyclotomic} 
 \big( 2^{2^{t}} + 1 \big) \mid \big( 2^{2^{t} u} + 1 \big), 
\end{equation}
 and we find that \eqref{fromcyclotomic} is equivalent to 
\begin{equation}\label{rewritecyclo}
 \big( 2^{2^{t}} + 1 \big) \mid \left( k + 2 \right). 
\end{equation}
 Now, let $q$ denote an odd prime factor of $2^{2^{t}} + 1$. Recalling that $u > 1$, we find that $q \leq 2^{2^{t}} + 1 < 2^{2^{t} u} - 
 1 = k$. So, since $q$ is an odd prime satisfying $q \leq k$, we have, from \eqref{comparepmod}, that 
\begin{equation}\label{modmodcyclo}
 k \pmod {q} < k \pmod {(q-1)}. 
\end{equation}
 Also, the relation in 
 \eqref{rewritecyclo} gives us that $q \mid (k+2)$, i.e., so that 
\begin{equation}\label{givesqminus2} 
 k \equiv -2 \pmod {q}. 
\end{equation}
 From \eqref{givesqminus2}, we see that the least nonnegative residue of $k$ modulo $q$
 is $q-2$, but the right-hand side of 
 \eqref{modmodcyclo} is in $\{ 0, 1, \ldots, q - 2 \}$, 
 so that the strict inequality in 	 \eqref{modmodcyclo} is contradicted. 

 So, we have shown that $a$ is necessarily a power of $2$. 
 Recalling that $a \geq 2$, we write $a = 2^{\lambda}$ for an integer $\lambda \geq 1$, with 
\begin{equation}\label{kpowertower} 
 k = 2^{2^{\lambda}} - 1.
\end{equation} 
 Since the cases for $\lambda = 1$ and $\lambda = 2$  produce the values $k = 3$ and $k = 15$, respectively,   it remains to consider the  
  cases for $\lambda \geq 3$. 

 We henceforth assume that $\lambda \geq 3$. 

 Now, consider the case whereby $\lambda$ is odd. Recalling that $p \leq k$ denotes an odd prime, and recalling \eqref{kpowertower}, 
 we let $p = 7$. The parity of $2^{\lambda}$ then gives us that $2^{2^{\lambda}} - 1 \equiv 3 \bmod 6$, i.e., so that $k \equiv 3 \bmod 
 6$. The integer sequence given by expressions of the form $2^z \bmod 7$ for positive integers $z$ is $3$-periodic, so that the parity of 
 $\lambda$ gives us that  $2^{2^{\lambda}} \equiv 4 \bmod 7$,  i.e. so that $k \equiv 3 \bmod 7$,   contradicting the strict 
 inequality in \eqref{comparepmod}. 

 So, we have that $\lambda \geq 3$ is even. Now, suppose that $\lambda \equiv 0 \bmod 4$. Again recalling \eqref{kpowertower} and 
 that $p \leq k$ is an odd prime, we proceed to take $p = 11$. The residue class of $2^{\lambda}$ modulo $4$ then gives us that 
 $2^{2^{\lambda}} \equiv 6 \bmod 10$, i.e., so that $k \equiv 5 \bmod 10$. The residue class of $2^{\lambda}$ modulo $10$ then gives 
 us that $2^{2^{\lambda}} \equiv 9 \bmod 11$, i.e., so that $k \equiv 8 \bmod 11$, and we again find that \eqref{comparepmod} 
 is contradicted. 

 From the foregoing arguments, we see that $\lambda \geq 3$ satisfies $\lambda \equiv 2 \bmod 4$, with $\lambda \geq 6$. In this case, 
 we set $p = 29$. Omitting details, the periodicity of powers of $2$ modulo 
 $28$ gives us that $k \equiv 15 \bmod 28$, 
 and the periodicity of powers of $2$ modulo $29$ gives us that 
 $k \bmod 29 \in \{ 15, 23, 24 \}$, 
 and we again find that the strict inequality in \eqref{comparepmod} is contradicted. 

 So, we have shown that it is not the case that $\lambda \geq 3$. So, if $k > 1$, then $k \in \{ 3, 15 \}$, and we can then conclude that 
 the desired containment $\mathcal{D}_{1} \subseteq \{ 1, 3, 15 \}$ holds. 
\end{proof}

 As above, B\"{u}y\"{u}ka\c{s}ik et al.\ \cite[Theorem 2(2)]{BuyukasikGoralSertbas2024} proved that $ \{ 1, 3, 15 \} \subseteq 
 \mathcal{D}_{1}$. So, from the reverse containment proved in Theorem 
 \ref{maintheorem}, we obtain the desired equality 
$ \mathcal{D}_{1} = \{ 1, 3, 15 \}$. 

\section{Conclusion}
 The extension of our above techniques to evaluate $\mathcal{D}_{s}$ for $s > 1$ provides a natural future area of research based 
 on the material in this paper. We encourage the pursuit of this. 

\subsection*{Acknowledgements}
The author acknowledges extensive interactions with 
 GPT-5.5 Pro during the exploratory and proof-development stages of this work. All AI-generated suggestions were substantially revised, corrected, and independently verified by the author, who assumes full responsibility for the mathematical content.

 \end{document}